\documentclass[12pt]{amsart}
\usepackage{amsfonts,amssymb,amscd,amsmath,enumerate,verbatim,calc,latexsym}

\usepackage[dvips]{graphicx}

%\usepackage[noxcolor]{pstricks}
%\input xy
%\xyoption{all}
%
%\usepackage{showkeys}

%
%------    GENERAL MACROS    -----
%
% Standard rings and fields, affine and projective space
%
               % the font for N,Z,Q,R,C

%
%------------------------------------------------
% Symbols in "Fraktur"
%
\def\frk{\mathfrak}               % font for "Fraktur"

\def\Phi{{\frk N}}
%
%------------------------------------------------
% Small letters in bold
%

\def\tb{{\bold t}}

\def\xb{{\bold x}}
\def\yb{{\bold y}}

%---------------------------------------------------
% Greek letters

%
\def\opn#1#2{\def#1{\operatorname{#2}}} % to make operators
%------------------------------------------------
% Numerical invariants of rings, ideals, and modules
%
\opn\chara{char} 
\opn\length{\ell} 
\opn\pd{pd} 
\opn\rk{rk}
\opn\projdim{proj\,dim} 
\opn\injdim{inj\,dim} 
\opn\rank{rank}
\opn\depth{depth} 
\opn\grade{grade} 
\opn\height{height}
\opn\embdim{emb\,dim} 
\opn\codim{codim}

\opn\Tr{Tr} 
\opn\bigrank{big\,rank}
\opn\superheight{superheight}
\opn\lcm{lcm}
\opn\trdeg{tr\,deg}%\emph{
\opn\reg{reg} 
\opn\lreg{lreg} 
\opn\ini{in} 
\opn\lpd{lpd}
\opn\size{size}
\opn\mult{mult}
\opn\dist{dist}
\opn\cone{cone}
\opn\lex{lex}
\opn\rev{rev}
%------------------------------------------------
% Divisors
%
\opn\div{div} \opn\Div{Div} \opn\cl{cl} \opn\Cl{Cl}
%
%------------------------------------------------
% Subsets of the spectrum of a ring
%
\opn\Spec{Spec} \opn\Supp{Supp} \opn\supp{supp} \opn\Sing{Sing}
\opn\Ass{Ass} \opn\Min{Min}
%
%------------------------------------------------
% Standard operations on ideals and modules
%
\opn\Ann{Ann} \opn\Rad{Rad} \opn\Soc{Soc}
%
%------------------------------------------------
% Linear algebra and homology, endo- and automorphisms
%
\opn\Syz{Syz} \opn\Im{Im} \opn\Ker{Ker} \opn\Coker{Coker}
\opn\Am{Am} \opn\Hom{Hom} \opn\Tor{Tor} \opn\Ext{Ext}
\opn\End{End} \opn\Aut{Aut} \opn\id{id} \opn\ini{in}

\opn\nat{nat}
\opn\pff{pf}%   \pf exists already
\opn\Pf{Pf} \opn\GL{GL} \opn\SL{SL} \opn\mod{mod} \opn\ord{ord}
\opn\Gin{Gin}
\opn\Hilb{Hilb}\opn\adeg{adeg}\opn\std{std}\opn\ip{infpt}
\opn\Pol{Pol}
\opn\sat{sat}
\opn\Var{Var}
\opn\Gen{Gen}

%
%------------------------------------------------
% Convexity
%
\opn\aff{aff} \opn\con{conv} \opn\relint{relint} \opn\st{st}
\opn\lk{lk} \opn\cn{cn} \opn\core{core} \opn\vol{vol}
\opn\link{link} \opn\star{star}
%------------------------------------------------
% Graded rings and Rees algebras
\opn\gr{gr}

%Tonys commands

\def\Mc{{\mathcal M}}

%
%------------------------------------------------
% Polynomials and power series
%

\def\pot#1#2{#1[\kern-0.28ex[#2]\kern-0.28ex]}

%
%------------------------------------------------
% Direct and inverse limits
%
\opn\dirlim{\underrightarrow{\lim}}
\opn\inivlim{\underleftarrow{\lim}}
%
%
% Names with a meaning
%

%
%------------------------------------------------
%
\let\to=\rightarrow

\def\Implies{\ifmmode\Longrightarrow \else
        \unskip${}\Longrightarrow{}$\ignorespaces\fi}
\def\implies{\ifmmode\Rightarrow \else
        \unskip${}\Rightarrow{}$\ignorespaces\fi}
\def\iff{\ifmmode\Longleftrightarrow \else
        \unskip${}\Longleftrightarrow{}$\ignorespaces\fi}

\let\:=\colon
%
%$
%
\newtheorem{Theorem}{Theorem}%[section]
\newtheorem{Lemma}[Theorem]{Lemma}

\newtheorem{Example}[Theorem]{Example}

%
% We like the var forms of some greek letters (as taught in German schools)
%
\let\epsilon\varepsilon
\let\phi=\varphi
\let\kappa=\varkappa
%
%           We print on A4 paper
%
\textwidth=15cm \textheight=22cm \topmargin=0.5cm
\oddsidemargin=0.5cm \evensidemargin=0.5cm \pagestyle{plain}
%
%           The pf environment of AMSART needs a little help
%
\def\qed{\ifhmode\textqed\fi
      \ifmmode\ifinner\quad\qedsymbol\else\dispqed\fi\fi}
\def\textqed{\unskip\nobreak\penalty50
       \hskip2em\hbox{}\nobreak\hfil\qedsymbol
       \parfillskip=0pt \finalhyphendemerits=0}
\def\dispqed{\rlap{\qquad\qedsymbol}}

%
% ------    END OF GENERAL MACROS    -------
\opn\dis{dis}
\opn\height{height}
\opn\dist{dist}
\def\pnt{{\raise0.5mm\hbox{\large\bf.}}}

\opn\Lex{Lex}

%
%-- macro for local cohomology-----------------------------

%-- macro for a complicated condition for the extended
%-- Hochster's formula

%
%
%
\begin{document}
\title{
Toric ideals of finite graphs and \\
adjacent $2$-minors 
}
\author{
Hidefumi Ohsugi
and
Takayuki Hibi
}
\thanks{
{\bf 2010 Mathematics Subject Classification:}
Primary 13F20.%; Secondary ?????. 
\\
\, \, \, {\bf Keywords:}
Toric ideals, Ideals of 2-adjacent minors,
finite graphs.
\\
\, \, \, This research was supported by JST CREST
}
\address{Hidefumi Ohsugi,
Department of Mathematics,
College of Science,
Rikkyo University,
Toshima-ku, Tokyo 171-8501, Japan} 
\email{ohsugi@rikkyo.ac.jp}
\address{Takayuki Hibi,
Department of Pure and Applied Mathematics,
Graduate School of Information Science and Technology,
Osaka University,
Toyonaka, Osaka 560-0043, Japan}
\email{hibi@math.sci.osaka-u.ac.jp}
% \dedicatory{}
\begin{abstract}
We study
the problem when an ideal generated by adjacent $2$-minors 
is the toric ideal of a finite graph.
\end{abstract}
\maketitle
% \section*{Introduction}
Let $X = (x_{ij})_{i=1,\ldots,m\atop j=1,\ldots,n}$ 
be a matrix of $mn$ indeterminates ,
and let $A = K[\{x_{ij}\}_{i=1,\ldots,m\atop j=1,\ldots,n}]$ be
the polynomial ring in $mn$ variables over a field $K$. 
Given % integers $a_1, a_2, b_1$ and $b_2$ with
$1 \leq a_1 < a_2 \leq m$ and $1 \leq b_1 < b_2 \leq n$, 
the symbol $[a_1,a_2|b_1,b_2]$ denotes the $2$-minor
$x_{a_1b_1}x_{a_2b_2} - x_{a_1b_2}x_{a_2b_1}$
of $X$.  In particular $[a_1,a_2|b_1,b_2]$ 
is a binomial of $A$.
A $2$-minor $[a_1,a_2|b_1,b_2]$ of $X$ is {\em adjacent} 
(\cite{HS}) if $a_2 = a_1 + 1$ and $b_2 = b_1 + 1$.
Following \cite{HH},  
we say that a set $\Mc$ of adjacent $2$-minors of $X$
is of {\em chessboard type} if the following conditions 
are satisfied:
\begin{itemize}
\item
if $[a,a+1|b,b+1]$ and $[a,a+1|b',b'+1]$
with $b < b'$ 
belong to $\Mc$,
then $b + 1 < b'$;
\item
if $[a,a+1|b,b+1]$ and $[a',a'+1|b,b+1]$
with $a < a'$ 
belong to $\Mc$,
then $a + 1 < a'$.
\end{itemize}
Given a set $\Mc$ of adjacent $2$-minors of $X$
of chessboard type, 
we introduce the finite graph $\Gamma_\Mc$  
on the vertex set $\Mc$, whose edges are 
$
\{ [a,a+1|b,b+1], [a',a'+1|b',b'+1] \}
$
such that
\begin{itemize}
\item
$[a,a+1|b,b+1] \neq [a',a'+1|b',b'+1]$,
\item
$\{ a, a+1 \} \cap \{ a', a'+1 \} \neq \emptyset$,
\item
$\{ b, b+1 \} \cap \{ b', b'+1 \} \neq \emptyset$.
\end{itemize} 
For example, if
$\Mc = \{ [1,2|2,3], [2,3|3,4], [3,4|2,3], [2,3|1,2]\}$, 
then $\Gamma_\Mc$ is a cycle of length $4$.
The ideal $I_\Mc$ is generated by
$x_{12} x_{23} - x_{13} x_{22}$, 
$x_{23} x_{34} - x_{24} x_{33}$, 
$x_{32} x_{43} - x_{33} x_{42}$ and
$x_{21} x_{32} - x_{22} x_{31}$.
The binomial
$x_{32} (x_{13} x_{21}  x_{34} x_{42} - x_{12} x_{24}  x_{31} x_{43} ) $
belongs to $I_\Mc$ but neither $x_{32}$ nor 
$x_{13} x_{21}  x_{34} x_{42} - x_{12} x_{24}  x_{31} x_{43} $
belongs to $I_\Mc$.
Thus $I_\Mc$ is not prime.

A fundamental fact regarding ideals generated by adjacent $2$-minors is

\begin{Lemma}[\cite{HH}]
\label{HerzogHibi}
Let $\Mc$ be a set of adjacent $2$-minors of $X$,
and let $I_\Mc$ be the ideal of $A$
generated by all $2$-minors
belonging to $\Mc$.
Then, $I_\Mc$ is a prime ideal 
if and only if $\Mc$ is of chessboard type, and
$\Gamma_\Mc$ possesses no cycle of length $4$.
\end{Lemma}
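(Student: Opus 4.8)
The plan is to prove the two implications separately, handling ``$I_\Mc$ prime $\Rightarrow$ $\Mc$ chessboard and $\Gamma_\Mc$ has no $4$-cycle'' by contraposition and explicit witnesses, and the converse by induction on $|\Mc|$.

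For the first implication I would argue contrapositively. Suppose first that $\Mc$ is not of chessboard type; after transposing $X$ if necessary, $\Mc$ then contains $f=[a,a+1|b,b+1]$ and $g=[a,a+1|b+1,b+2]$, and one checks directly that
\[
x_{a+1,b+2}\,f+x_{a+1,b}\,g=x_{a+1,b+1}\,\bigl(x_{a,b}x_{a+1,b+2}-x_{a,b+2}x_{a+1,b}\bigr)\in I_\Mc .
\]
Here $x_{a+1,b+1}\notin I_\Mc$ since $I_\Mc\subseteq\mm^2$, and the $2$-minor $[a,a+1|b,b+2]$ is not in $I_\Mc$ either: being of degree $2$ it would otherwise be a $K$-linear combination of the generators, yet the monomial $x_{a,b}x_{a+1,b+2}$ occurs in no generator, because the two cells of a generator monomial are always diagonally adjacent while $(a,b)$ and $(a+1,b+2)$ are not. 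Hence $I_\Mc$ is not prime. Suppose next that $\Mc$ is of chessboard type but $\Gamma_\Mc$ has a $4$-cycle. A short case analysis of the row- and column-index shifts along the cycle (each is $\pm1$ at every step, and the pattern $++--$ is forced) shows that, up to transposition and relabelling, the four minors are $[a,a+1|b,b+1]$, $[a+1,a+2|b+1,b+2]$, $[a+2,a+3|b,b+1]$, $[a+1,a+2|b-1,b]$, the ``diamond'' around the inner $2\times 2$ block in rows $a+1,a+2$ and columns $b,b+1$. Telescoping the four binomials around the cycle gives $x_{a+2,b}(M-M')\in I_\Mc$ with $M=x_{a,b+1}x_{a+1,b-1}x_{a+2,b+2}x_{a+3,b}$ and $M'=x_{a,b}x_{a+1,b+2}x_{a+2,b-1}x_{a+3,b+1}$ supported on the eight outer cells; then $x_{a+2,b}\notin I_\Mc$, and $M-M'\notin I_\Mc$ because in a binomial ideal $M-M'\in I_\Mc$ would require $M$ and $M'$ to be joined by a chain of generator substitutions, yet no generator monomial divides $M$ or $M'$, neither support containing a diagonally adjacent pair. (With $a=1,b=2$ this is exactly the example preceding the lemma.) So again $I_\Mc$ is not prime.

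For the converse I would assume $\Mc$ is of chessboard type with $\Gamma_\Mc$ free of $4$-cycles and induct on $|\Mc|$; the case $|\Mc|\le 1$ is the classical primeness of a generic $2$-minor. Order $\Mc$ lexicographically by lower-left corner, let $g=[a,a+1|b,b+1]$ be the largest minor, and put $\Mc'=\Mc\setminus\{g\}$; then $\Mc'$ is again chessboard with no $4$-cycle, so $I_{\Mc'}$ is prime by induction. Checking, for each cell of $g$, which minors of $\Mc$ can contain it --- using chessboard type and maximality of $g$ --- one finds that $x_{a+1,b}$ and $x_{a+1,b+1}$ occur in no generator other than $g$, that $x_{a,b}$ occurs in $I_{\Mc'}$ only through $h_1=[a-1,a|b-1,b]$ (if $h_1\in\Mc$), and that $x_{a,b+1}$ occurs there only through $h_2=[a-1,a|b+1,b+2]$ (if $h_2\in\Mc$), and that $h_1,h_2$ are the only possible neighbours of $g$ in $\Gamma_\Mc$. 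Since $x_{a+1,b},x_{a+1,b+1}$ are then transcendental over the subalgebra of $A/I_{\Mc'}$ generated by the other cells,
\[
A/I_\Mc\;\cong\;D[p,q]/(rp-sq),
\]
where $D$ is a domain (a polynomial subring of $A$ modulo $I_{\Mc'}$), $p,q$ are the images of $x_{a+1,b+1},x_{a+1,b}$, and $r,s$ are the images of $x_{a,b},x_{a,b+1}$, both nonzero as $I_{\Mc'}\subseteq\mm^2$. An elementary computation (map $D[p,q]/(rp-sq)$ into $D[r^{-1}][q]$ by $p\mapsto sq/r$ and check the kernel is $(rp-sq)$, using that $s$ is a nonzerodivisor modulo $r$) shows this ring is a domain as soon as $r,s$ is a regular sequence in $D$. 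So the proof reduces to showing $\{r,s\}$ is a regular sequence in $D$, i.e. that $x_{a,b+1}$ is a nonzerodivisor modulo $I_{\Mc'}+(x_{a,b})$.

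This last point will be the main obstacle, and it is exactly where the hypotheses are consumed. If $h_1\notin\Mc$ it is trivial: then $x_{a,b}$ is absent from $I_{\Mc'}$ and $x_{a,b+1}$ is a polynomial variable over the domain $D/rD$. If $h_1\in\Mc$, reducing $h_1=x_{a-1,b-1}x_{a,b}-x_{a-1,b}x_{a,b-1}$ modulo $x_{a,b}$ identifies $D/rD$ with $C/(\alpha\beta)$, where $C=A''/I_{\Mc''}$ is a domain ($\Mc''=\Mc\setminus\{g,h_1\}$, induction) and $\alpha,\beta$ are the images of $x_{a-1,b},x_{a,b-1}$; one must then show $x_{a,b+1}$ lies in no associated prime of $C/(\alpha\beta)$. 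Here ``no $4$-cycle'' is decisive: if also $h_2\in\Mc$, then, since $g,h_1,[a-2,a-1|b,b+1],h_2$ would otherwise form an induced $4$-cycle, the minor $h'=[a-2,a-1|b,b+1]$ is absent from $\Mc$ --- and it is precisely $h'$ which, when present, contributes a binomial linking $x_{a,b+1}$ to $\alpha$ or $\beta$ and thereby throws $x_{a,b+1}$ into an associated prime of $C/(\alpha\beta)$, exactly the mechanism that obstructs primeness in the diamond example. The hard part will be to carry out this verification in general --- a controlled analysis of the associated primes of $C/(\alpha\beta)$ in terms of the chessboard- and $4$-cycle-excluded minors around $g$, concluding that $x_{a,b+1}$ meets none of them --- after which the induction closes and $I_\Mc$ is prime.
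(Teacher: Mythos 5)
Your ``only if'' direction is essentially complete and correct. The contrapositive witnesses check out: for a chessboard violation, $x_{a+1,b+2}f+x_{a+1,b}g=x_{a+1,b+1}\,[a,a+1|b,b+2]$ with $x_{a+1,b+1}\notin I_\Mc$ (degree reasons) and $[a,a+1|b,b+2]\notin I_\Mc$ (its monomial $x_{a,b}x_{a+1,b+2}$ occurs in no degree-$2$ generator); and your classification of $4$-cycles of $\Gamma_\Mc$ as the ``diamond'' is right --- under the chessboard condition adjacent vertices of $\Gamma_\Mc$ differ by exactly $\pm1$ in both row and column, and the shift patterns $+-+-$ in both coordinates are excluded because they collapse vertices, forcing the configuration of the paper's own example up to symmetry. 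The connectedness-by-moves criterion for membership of $M-M'$ in a pure-difference binomial ideal, combined with the observation that no two cells in the support of $M$ or $M'$ are diagonally adjacent, correctly shows $M-M'\notin I_\Mc$. (For the record, the paper does not prove this lemma at all; it is quoted from Herzog--Hibi \cite{HH}, so there is no internal proof to compare against.)

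The ``if'' direction, however, has a genuine gap, and you have flagged it yourself: everything up to the reduction ``$I_\Mc$ is prime provided $r=x_{a,b},\,s=x_{a,b+1}$ is a regular sequence in $D=A''/I_{\Mc'}$'' is sound (the choice of the lex-largest minor $g$, the fact that $x_{a+1,b}$ and $x_{a+1,b+1}$ are free variables over $D$, and the domain criterion for $D[p,q]/(rp-sq)$ are all correct), but the regular-sequence claim itself --- equivalently, that $x_{a,b+1}$ avoids every associated prime of $C/(\alpha\beta)$ where $\alpha=x_{a-1,b}$, $\beta=x_{a,b-1}$ --- is precisely the content of the lemma and is nowhere established. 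It is not a routine verification: $\Ass(C/(\alpha\beta))\subseteq\Ass(C/\alpha)\cup\Ass(C/\beta)$ reduces it to understanding the associated primes of a single variable in a ring of the form $A/I_{\Mc''}$, which is a statement \emph{not} covered by your induction hypothesis (``$I_{\Mc''}$ is prime'' says nothing about $\Ass$ of $(x_{c,d})$ in the quotient). Closing the induction would require a strictly stronger inductive statement controlling these associated primes combinatorially, and your remark that the absence of the minor $[a-2,a-1|b,b+1]$ (forced by the no-$4$-cycle hypothesis) is ``decisive'' is a plausible heuristic but not an argument. As it stands, the converse implication is a reduction to an unproved claim of essentially the same difficulty as the original.
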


%Recall from \cite{OhHiquadratic}
%what the toric ideal
%arising from a finite graph is.
A finite graph $G$ is said to be {\em simple}
if $G$ has no loop and no multiple edge.
Let $G$ be a finite simple graph
on the vertex set
$[d] = \{ 1, \ldots, d\}$,
and let $E(G) = \{ e_1, \ldots, e_n \}$ be its set of edges.
Let $K[\tb] = K[t_1, \ldots, t_d]$ denote the polynomial ring
in $d$ variables over $K$,
and let $K[G]$ denote the subring of $K[\tb]$ generated by the
squarefree quadratic monomials $\tb^e = t_it_j$
with $e = \{ i, j \} \in E(G)$.
The semigroup ring $K[G]$ is called the {\em edge ring}
of $G$.  
Let $K[\yb] = K[y_1, \ldots, y_n]$ denote the polynomial ring
in $n$ variables over $K$.
The kernel $I_G$ of the surjective homomorphism 
$\pi : K[\yb] \to K[G]$ defined by setting
$\pi(y_i) = \tb^{e_i}$ for $i = 1, \ldots, n$
is called the {\em toric ideal} of $G$.  
Clearly, $I_G$ is a prime ideal.
It is known that $I_G$ is generated by
the binomials corresponding to even closed walks of $G$.
See \cite{Vil} , \cite[Chapter 9]{Stu} and
\cite[Lemma 1.1]{OhHiquadratic}
for details.

\begin{Example}
\label{Examplebipartite}
{\em
Let $G$ be a complete bipartite graph with the edge set
$E(G) = \{ \{i, p+ j\} \ | \  1 \leq i \leq p, \ \ 1 \leq j \leq q\}.$
Let $X = (x_{ij})_{i=1,\ldots,p \atop j=1,\ldots,q}$ 
be a matrix of $p q$ indeterminates and
$K[\xb] = K[\{x_{ij}\}_{i=1,\ldots,p \atop j=1,\ldots,q}]$.
Then, $I_G$ is the kernel of the surjective homomorphism 
$\pi : K[\xb] \to K[G]$ defined by setting
$\pi(x_{ij}) = t_i t_{p+j}$ for $1 \leq i \leq p, 1 \leq j \leq q$.
It is known \cite[Proposition 5.4]{Stu} that
$I_G$ is generated by the set of all $2$-minors of $X$.
Note that each 2-minor $x_{i j} x_{i' j'} - x_{i j'} x_{i' j}$ corresponds to
the cycle $\{ \{i,p+j\}, \{p+j, i'\}, \{i',p+j'\}, \{p+j',i\}  \}$ of $G$.
}
\end{Example}

In general, a toric ideal is the defining ideal
of a homogeneous semigroup ring.  
We refer the reader to \cite{Stu}
for detailed information on toric ideals.  
It is known \cite{ES}
that a binomial ideal $I$, i.e., an ideal
generated by binomials, is a prime ideal
if and only if $I$ is a toric ideal.
An interesting research problem on toric ideals is
to determine when a binomial ideal is
the toric ideal of a finite graph.

\begin{Example}
\label{Example}
{\em
The ideal
 $I
=
\langle
x_1x_2 - x_3x_4,
x_1x_2 - x_5x_6,
x_1x_2 - x_7x_8
\rangle$
%Let $I$ be the binomial ideal generated by 
%$x_1x_2 - x_3x_4,
%x_1x_2 - x_5x_6$ and 
%$x_1x_2 - x_7x_8$.
%Then, the ideal $I$ 
is the toric ideal of the %homogeneous 
semigroup ring
$
K[t_1t_5,t_2t_3t_4t_5,
t_1t_2t_5,t_3t_4t_5,t_2t_3t_5,t_1t_4t_5,t_1t_3t_5,t_2t_4t_5].
$
If there exists a graph $G$ such that $I = I_G$,
then three quadratic binomials correspond to cycles of length 4.
However, this is impossible
since these three cycles must have common two edges $e_1$ and $e_2$
such that $e_1 \cap e_2 = \emptyset$.
%they have no common vertex.
%where $e_1$ and $e_2$ have no common vertex.
%
Thus, $I$ cannot be the toric ideal of a finite graph.
This observation implies that
the toric ideal of a finite distributive lattice
${\mathcal L}$
(see \cite{HIBI}) is the toric ideal of a finite graph if and only if
 ${\mathcal L}$ is planar.
In fact, if ${\mathcal L}$ is planar,
then it is easy to see that
the toric ideal of ${\mathcal L}$ is the toric ideal of
a bipartite graph.
If ${\mathcal L}$ is not planar,
then ${\mathcal L}$ contains a sublattice that is isomorphic to 
the Boolean lattice $B_3$ of rank 3.
Since the toric ideal of $B_3$ has three binomials above,
the toric ideal of ${\mathcal L}$ cannot be
the toric ideal of a finite graph.
}
\end{Example}

Let $\Mc$ be a set of adjacent $2$-minors.
Now, we determine when a binomial ideal 
$I_\Mc$ generated by $\Mc$
is the toric ideal $I_G$ of a finite graph $G$.
Since $I_G$ is a prime ideal,
according to Lemma \ref{HerzogHibi}, if
there exists a finite graph $G$ with $I_\Mc = I_G$,
then $\Mc$ must be of chessboard type and
$\Gamma_\Mc$ possesses no cycle of length $4$.

\begin{Theorem}
Let $\Mc$ be a set of adjacent $2$-minors.
Then, there exists a finite graph $G$ such that $I_\Mc=I_G$
if and only if 
$\Mc$ is of chessboard type,
$\Gamma_\Mc$ possesses no cycle of length $4$,
and
each connected component of
$\Gamma_\Mc$ possesses at most one cycle.
\end{Theorem}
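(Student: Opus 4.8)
I would prove both implications, the following reduction applying to each. Delete all variables $x_{ij}$ that occur in no minor of $\Mc$ (each becomes an isolated edge of the prospective $G$), so assume every $x_{ij}$ occurs. Any two distinct minors sharing an entry are adjacent in $\Gamma_\Mc$ by the very definition of the edges, so distinct connected components of $\Gamma_\Mc$ involve pairwise disjoint sets of variables; hence $I_\Mc$ is, after partitioning the variables, the sum of the ideals of its components, and the theorem reduces to the case $\Gamma_\Mc$ connected. For the necessity of the first two conditions: if $I_\Mc=I_G$ then $I_\Mc$ is prime, so Lemma~\ref{HerzogHibi} gives that $\Mc$ is of chessboard type and $\Gamma_\Mc$ has no $4$-cycle; it then remains to rule out two independent cycles.

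For that last point — which I expect to be the crux — suppose $I_\Mc=I_G$ while some component of $\Gamma_\Mc$ contains two independent cycles. Every minor $M=[a,a+1|b,b+1]$ of $\Mc$ becomes, in $I_G$, the binomial of a $4$-cycle $C_M$ of $G$, and adjacency of $M$ and $M'$ in $\Gamma_\Mc$ forces $C_M$ and $C_{M'}$ to share the edge of $G$ labelled by their common entry. The plan is to use the two independent cycles to exhibit three \emph{distinct} $4$-cycles of $G$ sharing a common pair of opposite (hence disjoint) edges; this is impossible, since a pair of disjoint edges of a graph lies on exactly two common $4$-cycles. Equivalently, such a configuration would put the three binomials $y_1y_2-y_3y_4,\ y_1y_2-y_5y_6,\ y_1y_2-y_7y_8$ of the Boolean lattice $B_3$ inside $I_G$, which is exactly the obstruction identified in Example~\ref{Example}. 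The delicate part will be to read off the three required $4$-cycles from the way the two cycles of $\Gamma_\Mc$ intersect around a common minor, and to check that their ``complementary'' edge-pairs are pairwise distinct.

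For sufficiency, assume $\Gamma_\Mc$ is connected and chessboard, with no $4$-cycle and at most one cycle. I would build $G$ by gluing copies of $K_{2,2}$: to each minor $M$ attach a $4$-cycle $C_M$ whose four edges carry the labels $x_{ab},x_{a,b+1},x_{a+1,b+1},x_{a+1,b}$ in cyclic order, so that the binomial of $C_M$ is exactly that of $M$; then, for each edge $\{M,M'\}$ of $\Gamma_\Mc$, glue $C_M$ and $C_{M'}$ along the edge labelled by their unique common entry, identifying endpoints accordingly. If $\Gamma_\Mc$ is a tree this produces a connected bipartite graph on $2|\Mc|+2$ vertices; if $\Gamma_\Mc$ has exactly one cycle, I would glue the one remaining cycle-edge of $\Gamma_\Mc$ in whichever of the two possible ways prevents the $2$-colouring from extending (exactly one of them does), obtaining a connected non-bipartite graph on $2|\Mc|$ vertices. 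In both cases $|E(G)|$ equals the number of entries occurring in $\Mc$, and each minor of $\Mc$ is the binomial of the $4$-cycle $C_M\subseteq G$, so $I_\Mc\subseteq I_G$; since $I_\Mc$ is prime (Lemma~\ref{HerzogHibi}) and $I_G$ is prime, it then suffices to show $\dim K[\yb]/I_\Mc=\dim K[G]$. The right side is an Euler-characteristic count from the vertex and edge numbers above together with (non)bipartiteness. For the left side, I would order the minors so that a spanning tree of $\Gamma_\Mc$ comes first; each initial segment defines an ideal $I_{\Mc'}$ with $\Gamma_{\Mc'}$ a tree, hence prime by Lemma~\ref{HerzogHibi}, and one checks the next minor is a nonzerodivisor modulo it, so the minors form a regular sequence; thus $\height I_\Mc=|\Mc|$ and $\dim K[\yb]/I_\Mc=|E(G)|-|\Mc|$, which agrees with the right side.

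After all this, the sufficiency direction is essentially bookkeeping; the one subtlety there is that ``at most one cycle'' is precisely what makes every gluing identify the predicted number of vertices, so that the dimension count closes — with two independent cycles the extra cycle-closing gluings would over-identify, which dovetails with the necessity. The genuine obstacle is the necessity of the no-two-cycles condition: extracting the three $4$-cycles of $G$ that share two opposite edges (i.e.\ producing the $B_3$ pattern inside $I_\Mc=I_G$) from the cycle structure of $\Gamma_\Mc$ is where I expect the real work to lie.
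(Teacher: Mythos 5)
The reduction to connected components and the use of Lemma~\ref{HerzogHibi} are fine, and your sufficiency argument, though different from the paper's, is essentially sound: gluing the $4$-cycles $C_M$ along shared edges and then comparing the heights of the nested primes $I_\Mc \subseteq I_G$ is a legitimate alternative to the paper's route (which instead verifies, via the criterion of \cite[Theorem 1.2]{OhHiquadratic}, that $I_G$ is generated by its quadratic binomials and identifies those with the minors). Your vertex/edge count for $G$ and the bipartite/non-bipartite height formula for $I_G$ are exactly what the paper uses; and $\height I_\Mc = |\Mc|$ is \cite[Theorem 2.3]{HH}, which you could cite outright rather than re-derive --- your regular-sequence sketch leaves the ``next minor is a nonzerodivisor'' step unverified, and that step is not automatic for the minor closing the cycle, all four of whose entries may already occur among the earlier minors.

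The genuine gap is in the necessity of ``at most one cycle per component,'' which you yourself flag as the crux and then only outline. Your plan --- exhibit three distinct $4$-cycles of $G$ sharing a pair of opposite edges, i.e.\ the $B_3$ pattern of Example~\ref{Example} --- presupposes that the two independent cycles of $\Gamma_\Mc$ ``intersect around a common minor.'' They need not: a connected component with two independent cycles can consist of two vertex-disjoint cycles joined by a path, and then there is no local configuration from which to read off three $4$-cycles on a common opposite pair; indeed in a chessboard configuration each variable occurs in at most two minors, so each designated edge of $G$ lies on at most two of the $4$-cycles $C_M$, and any $B_3$ pattern would have to come from non-generator binomials of $I_\Mc$, which you have no handle on. The obstruction is global, not local, and the paper kills it with precisely the count you deploy only in the sufficiency half: $\height I_\Mc = p$ by \cite[Theorem 2.3]{HH}, while $n = 4p-q$ and $d \leq 4p-2q$ give $\height I_G \geq n-d \geq q$ (or $\geq q+1$ in the bipartite case), whence $p \geq q$ and the component has at most one independent cycle. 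Redirecting your own bookkeeping from the sufficiency direction to the necessity direction is what closes the argument; the $B_3$ route, as stated, does not.
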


\begin{proof}
We may assume that $\Mc$ is of chessboard type and
$\Gamma_\Mc$ possesses no cycle of length $4$. 
Let $\Mc = \Mc_1 \cup \cdots \cup \Mc_s$, where
$\Gamma_{\Mc_1}, \ldots, \Gamma_{\Mc_s}$ is
the set of connected components of $\Gamma_\Mc$.
If $i \neq j$, then $f \in \Mc_i$ and $g \in \Mc_j$ have no common variable.
Hence, there exists a finite graph $G$ such that
$I_{\Mc}=I_{G}$
if and only if for each $1 \leq i \leq s$,
there exists a finite graph $G_i$ such that
$I_{\Mc_i}=I_{G_i}$.
Thus, we may assume that $\Gamma_\Mc$ is connected.
Let $p$ be the number of vertices of $\Gamma_\Mc$,
and let $q$ be the number of edges of $\Gamma_\Mc$.
Since $\Gamma_\Mc$ is connected, we have $p \leq q+1$.

\smallskip

\noindent
{\bf [Only if]}
Suppose that there exists a finite graph $G$ with $I_\Mc = I_G$.
From \cite[Theorem 2.3]{HH}, the codimension of $I_\Mc$ is equal to $p$.
Let $d$ be the number of vertices of $G$,
and let $n$ be the number of edges of $G$.
Then, we have $d \leq 4 p -2q$ and $n = 4 p -q$. 
The height of $I_G$ is given in \cite{Vil}.
If $G$ is bipartite, then
the codimension of $I_G$ satisfies
$p \geq n-d+1 \geq  (4 p -q) - (4 p -2q) + 1 = q+1$.
Hence, we have $p = q+1$ and $\Gamma_\Mc$ is a tree.
On the other hand, if $G$ is not bipartite,
then the codimension of $I_G$ satisfies
$p \geq n-d \geq  (4 p -q) - (4 p -2q) = q$.
Hence, we have $p \in \{ q, q+1\}$ and $\Gamma_\Mc$ has at most one cycle.

\smallskip

\noindent
{\bf [If]}
Suppose that $\Gamma_\Mc$ has at most one cycle.
Then, we have $p \in \{ q, q+1\}$. 

\smallskip

\noindent
Case 1. $p = q+1$, i.e., $\Gamma_\Mc$ is a tree.

Through induction on $p$, we will show that there exists a 
connected bipartite graph $G$
such that $I_\Mc = I_G$.
If $p =1 $, then $I_\Mc = I_G$ where $G$ is a cycle of length $4$.
Let $k > 1$, and suppose that the assertion holds for $p = k-1$.
Suppose that $\Gamma_\Mc$ has $k$ vertices.
Since $\Gamma_\Mc$ is a tree,
$\Gamma_\Mc$ has a vertex $v = [a,a+1|b,b+1]$ of degree 1.
Let $\Mc' = \Mc \setminus \{v\} $.
Since $\Gamma_{\Mc'}$ is a tree,
there exists a connected bipartite graph $G'$ such that
$I_{\Mc'}=I_{G'}$
by the hypothesis of induction.
From \cite[Theorem 1.2]{OhHiquadratic},
since $I_{G'}$ is generated by quadratic binomials,
any cycle of $G'$ of length $\geq 6$ has a chord.
Let $v' = [a',a'+1|b',b'+1]$ denote the vertex of $\Gamma_\Mc$ 
that is incident with $v$.
Let $e = \{i,j\}$ be the edge of $G'$ corresponding to
the common variable of $v$ and $v'$.
%of $\Gamma_\Mc$.
Let  $\{1,2,\ldots,d\}$ be the vertex set of $G'$.
We now define the connected bipartite graph $G$ on the vertex set $\{1,2,\ldots,d,d+1,d+2\}$
with the edge set $E(G') \cup \{ \{i,d+1\}, \{d+1,d+2\}, \{d+2,j\} \}$.
Then, any cycle of $G$ of length $\geq 6$ has a chord, and hence,
$I_G$ is generated by quadratic binomials.
Thus,
$I_G$ is generated by the quadratic binomials of $I_{G'}$ together with
$v$ corresponding to 
the cycle $\{ \{i,d+1\}, \{d+1,d+2\}, \{d+2,j\} ,\{j,i\}\}$.
Therefore, $I_\Mc = I_G$.

\smallskip

\noindent
Case 2. $p = q$, i.e., $\Gamma_\Mc$ has exactly one cycle.

Then, we have $p \geq 8$.
Through induction on $p$, we will show that there exists a graph $G$
such that $I_\Mc = I_G$.
If $p =8$, then $\Gamma_\Mc$ is a cycle of length 8.
Then,
$I_\Mc = I_G$ where $G$ is the graph shown in Figure 1.

\begin{figure}%[h]
  \begin{center}
    \scalebox{0.5}{\includegraphics{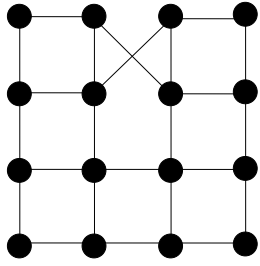}}
  \end{center}
  \caption{Graph for $\Mc$ such that $\Gamma_\Mc$ is a cycle of length 8.}
\end{figure} 

Let $k > 8$ and suppose that the assertion holds for $p = k-1$.
Suppose that $\Gamma_\Mc$ has $k$ vertices.
If $\Gamma_\Mc$ has a vertex $v = [a,a+1|b,b+1]$ of degree 1, then
$\Gamma_{\Mc'}$ where $\Mc' = \Mc \setminus \{v\} $
has exactly one cycle, and hence,
there exists a graph $G'$ such that
$I_{\Mc'}=I_{G'}$
by the hypothesis of induction.
Let $v' = [a',a'+1|b',b'+1]$ denote the vertex of $\Gamma_\Mc$ 
that is incident with $v$.
Let $e = \{i,j\}$ be the edge of $G'$ corresponding to
the common variable of $v$ and $v'$.
Suppose that the vertex set of $G'$ is $\{1,2,\ldots,d\}$.
We now define the graph $G$ on the vertex set $\{1,2,\ldots,d,d+1,d+2\}$
with the edge set $E(G') \cup \{ \{i,d+1\}, \{d+1,d+2\}, \{d+2,j\} \}$.
Since $G'$ satisfies the conditions in \cite[Theorem 1.2]{OhHiquadratic},
it follows that $G$ satisfies the conditions in \cite[Theorem 1.2]{OhHiquadratic}.
Thus, $I_G$ is generated by 
the quadratic binomials of $I_{G'}$ together with
$v$ corresponding to 
the cycle $\{ \{i,d+1\}, \{d+1,d+2\}, \{d+2,j\} ,\{j,i\}\}$.
Therefore, $I_\Mc = I_G$.

Suppose that $\Gamma_\Mc$ has no vertex of degree 1.
Then, $\Gamma_\Mc$ is a cycle of length $k$.
A 2-minor $ad -bc \in \Mc$ is called {\it free}
if one of the following holds:
\begin{itemize}
\item
Neither $a$ nor $d$ appears in other 2-minors of $\Mc$,
\item
Neither $b$ nor $c$ appears in other 2-minors of $\Mc$.
\end{itemize}
From \cite[Lemma 1.6]{HH},
$\Mc$ has at least two free 2-minors.
Let $v = [a,a+1|b,b+1]$ be a free 2-minor of $\Mc$.
We may assume that neither $x_{a, b}$ nor $x_{a+1, b+1}$ appears in other 2-minors of $\Mc$.
Since $\Gamma_\Mc$ is a cycle,
$x_{a+1, b}$ appears in exactly two 2-minors of $\Mc$ and 
$x_{a, b+1}$ appears in exactly two 2-minors of $\Mc$.
Let $\Mc' = \Mc \setminus \{v\} $.
Since $\Gamma_{\Mc'}$ is a tree,
there exists a connected bipartite graph $G'$ such that
$I_{\Mc'}=I_{G'}$
by the argument in Case 1.
Suppose that
the edge $\{1,3\}$ corresponds to the variable $x_{a+1, b}$
and 
the edge $\{2,4\}$ corresponds to the variable $x_{a, b+1}$.
We now define the graph $G$ as shown in Figure 2,
where vertices 1 and  2 belong to the same part of 
the bipartite graph $G'$.
Note that $G$ is not bipartite.
\begin{figure}%[h]
  \begin{center}
    \scalebox{0.5}{\includegraphics{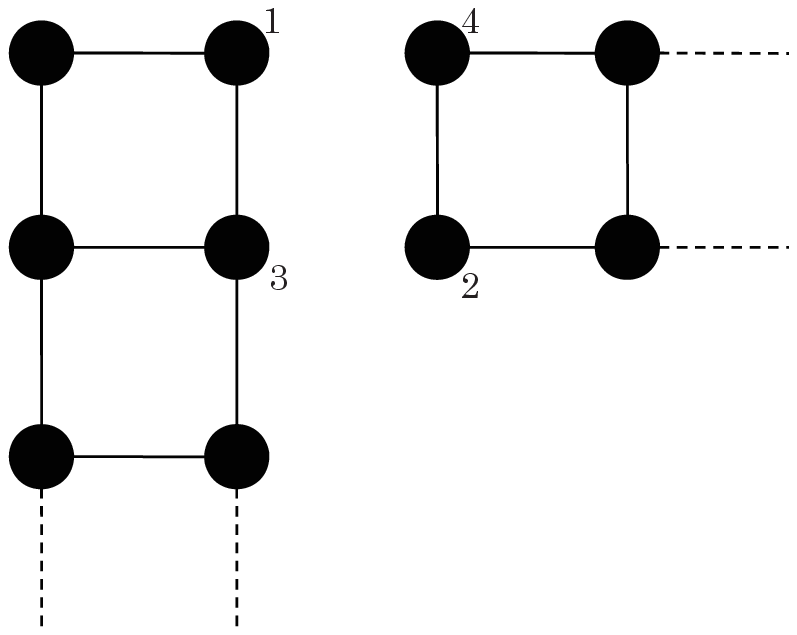}}
    $\begin{array}{c}
\longrightarrow 
\\
\\
\\
\\
\\
\\
\\
\end{array}$
    \scalebox{0.5}{\includegraphics{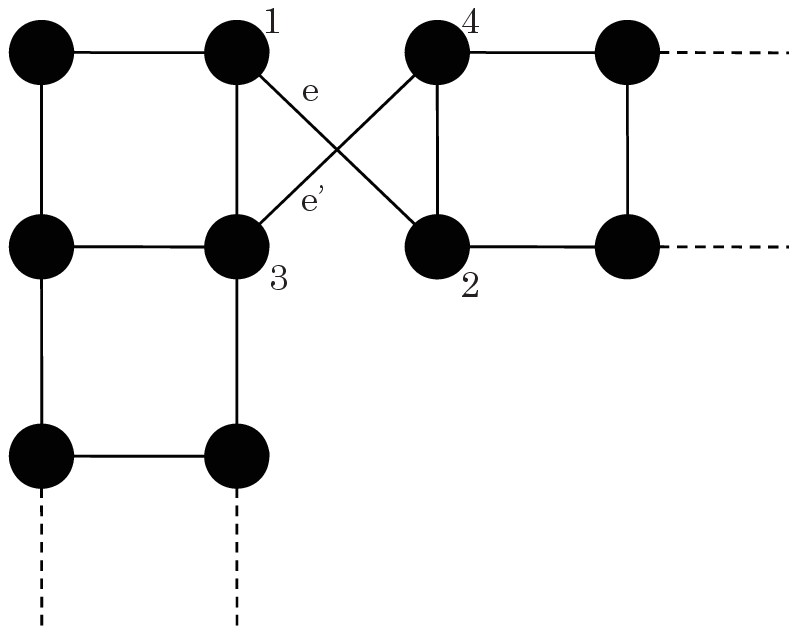}}
    \vspace{-1.5cm}
  \end{center}
\caption{New graph $G$ arising from $G'$.}
\end{figure} 
Let $e = \{1,2\}$ and $e' = \{3,4\}$.
Since $G'$ is a bipartite graph,
it follows that
\begin{itemize}
\item[(a)]
If either $e$ or $e'$ is an edge of an even cycle $C$ of $G$,
then $\{e, e'\} \subset E(C)$.
\item[(b)]
If $C'$ is an odd cycle of $G$, then $\{e, e'\} \cap E(C') \neq \emptyset$.
\end{itemize}

Let $I$ denote the ideal generated by all quadratic binomials in $I_G$.
Since each quadratic binomial in $I_G$ corresponds to a cycle of $G$ of length 4,
it follows that %$G$ has exactly $4s$ cycles of length $4$
%and we have 
$I_\Mc = I$.
Thus, it is sufficient to show that $I_G =I$, i.e., $I_G$ is generated by quadratic binomials.
From \cite[Theorem 1.2]{OhHiquadratic},
since $G'$ is bipartite and since $I_{G'}$ is generated by quadratic binomials,
all cycles of $G'$ of length $\geq 6$ have a chord.

Let $C$ be an even cycle of $G$ of length $\geq 6$.
If $E(C) \cap \{e,e'\} = \emptyset$, then $C$ has an even-chord
since all cycles of the bipartite graph $G'$ of length $\geq 6$ have a chord.
Suppose that 
$\{e, e'\} \subset E(C)$ holds.
Then, either $\{1,3\}$ or $\{2,4\}$ is a chord of $C$.
Moreover, such a chord is an even-chord of $C$
from (b) above.

Let $C$ and $C'$ be odd cycles of $G$ having exactly one common vertex.
From (b) above, we may assume that $e \in E(C) \setminus E(C')$ and 
$e' \in E(C') \setminus E(C)$.
If $\{1,3\}$ does not belong to $E(C) \cup E(C')$, then $\{1,3\}$ satisfies the condition
in \cite[Theorem 1.2 (ii)]{OhHiquadratic}.
If $\{1,3\}$ belongs to $E(C) \cup E(C')$, then $\{2,4\} \notin E(C) \cup E(C')$
since $C$ and $C'$ have exactly one common vertex.
Hence, $\{2,4\}$ 
satisfies the condition in \cite[Theorem 1.2 (ii)]{OhHiquadratic}.

Let $C$ and $C'$ be odd cycles of $G$ having no common vertex.
Then, neither $\{1,3\}$ nor $\{2,4\}$ belong to $E(C) \cup E(C')$.
Hence, $\{1,3\}$ and $\{2,4\}$ satisfy the condition in \cite[Theorem 1.2 (iii)]{OhHiquadratic}.

Thus, from \cite[Theorem 1.2]{OhHiquadratic}, $I_G$ is generated by quadratic binomials.
Therefore, $I_G = I_\Mc$ as desired.
\end{proof}

\end{document}